\documentclass[a4paper,12pt]{article}
\usepackage{amsmath,amssymb,amsthm}
\usepackage{graphicx}
\usepackage{color}
 \newtheorem{theorem}{Theorem}
 
 \newtheorem{claim}[theorem]{Claim}

 {\theoremstyle{definition}
 \newtheorem{definition}[theorem]{Definition}
 \newtheorem{remark}[theorem]{Remark}

 }

%

\def\N{\ensuremath{\mathbb N}} 
\def\T{\ensuremath{\mathbb T}} 
\def\R{\ensuremath{\mathbb R}} 
\def\C{\ensuremath{\mathbb C}} 
\def\Z{\ensuremath{\mathbb Z}} 

\newcommand{\GGG}{\Gamma}

\newcommand{\aaa}{\alpha}

\newcommand{\bbb}{\beta}

\newcommand{\ds}{\displaystyle}

\newcommand{\kkk}{\kappa}

\newcommand{\oo}{\infty}

\newcommand{\sse}{\subset}

\renewcommand{\ggg}{\gamma}

\title{Convergence of ergodic averages for many group rotations}
\author{
Zolt\'an Buczolich\thanks{This
author  was supported by the Hungarian
National Foundation for Scientific Research K075242.},
Department of Analysis, E\"otv\"os Lor\'and\\
University, P\'azm\'any P\'eter S\'et\'any 1/c, 1117 Budapest, Hungary\\
email: buczo@cs.elte.hu\\
{\tt www.cs.elte.hu/\hbox{$\sim$}buczo}\\
 \\ and\\
 \\
Gabriella Keszthelyi\thanks{This
author  was supported by the Hungarian
National Foundation for Scientific Research K104178.
\newline\indent {\it 2010 Mathematics Subject
Classification:} Primary 22D40; Secondary 37A30, 28D99, 43A40.
\newline\indent {\it Keywords:} Birkhoff average, locally compact Abelian group, torsion, $p$-adic integers },
 Department of Analysis, E\"otv\"os Lor\'and\\
University, P\'azm\'any P\'eter S\'et\'any 1/c, 1117 Budapest, Hungary\\
email: keszthelyig@gmail.com\\
}

\begin{document}
\maketitle

\begin{abstract}
Suppose that $G$ is a compact Abelian topological group,
$m$ is the Haar measure on $G$ and 
 $f:G \rightarrow \R$ is a measurable function.
 Given $(n_k)$, a strictly monotone increasing sequence of integers  we consider the nonconventional ergodic/Birkhoff averages\\
$\ds M_N^{\alpha}f(x)=\frac{1}{N+1} \sum_{k=0}^N f(x+n_k{\alpha}).$\\
The $f$-rotation set is\\
$\ds \Gamma_f=\{\alpha \in G: M_N^{\alpha} f(x) \text{ converges for $m$ a.e. $ x$ as $N\to\oo$.}\}$

We prove that
if  $G$ is a compact locally connected Abelian group and $f: G \rightarrow \R$ is a measurable function then from $m(\Gamma_{f})>0$ it follows that $f \in L^1(G).$

A similar result is established for ordinary Birkhoff averages if $G=Z_{p}$, the group
of $p$-adic integers. 

However, if the dual group, $\widehat{G}$ contains ``infinitely
many multiple torsion" then such results do not hold if one considers non-conventional 
Birkhoff averages along ergodic sequences.

What really matters in our results is the boundedness of the tail, $f(x+n_{k}\aaa)/k$, $k=1,...$ for a.e. $x$ for many $\aaa$, hence some of our theorems are stated by using 
instead of $\GGG_{f}$
slightly larger sets, denoted by $\GGG_{f,b}$.
\end{abstract}


\section{Introduction}\label{introduction}

The starting point of this paper is a result of the first listed author in \cite{ETDS} which states that if $f$ is a (Lebesgue) measurable function on the unit circle $\T$
and $\GGG_{f}$ denotes the set of those $\aaa$'s for which the Birkhoff averages
$$ { M_ { n } ^ { \alpha } f }(x)={1\over n+1}
\sum_{k=0}^{n}f(x+k\alpha )$$
converge for almost every $x$ then from $m(\GGG_{f})>0$ it follows that $f\in L^{1}(\T)$. Hence $M_{n}^{\aaa}f$ converges for all $\aaa\in\T$.

In this paper we consider generalizations of this result to compact Abelian groups 
equipped with their Haar measure $m$. Theorem \ref{thconn} implies that an analogous result is true even for non-convential ergodic averages considered on a compact,
locally connected Abelian group $G$.

On the other hand, if there is ``sufficiently many multiple torsion" in the dual group $\widehat{G}$
then Theorem \ref{thtorsion2} implies that there are non-$L^{1}$ measurable functions $f$
for which $m(\GGG_{f})=1$ (in fact, $\GGG_{f}=G$) if one considers non-conventional
Birkhoff averages along ergodic sequences. Having   lots of torsion in $\widehat{G}$
means that $G$ is highly disconnected.
In our opinion the most surprizing result of this paper is Theorem \ref{thpadic} which states that if $G=Z_{p}$, the group of $p$-adic integers and one considers the ordinary
ergodic averages of a measurable function $f$ then from $m(\GGG_{f})>0$
it follows that $f\in L^{1}(G)$. The group $Z_{p}$ is zero-dimensional and all elements
of its dual group, $Z(p^{\oo})$, are of finite order. If one considers a group $G$ which
is a countable product of $Z_{p}$'s then there is enough ``multiple torsion"
(see Definition \ref{deftor}) in $\GGG_{f}$ and Theorem \ref{thtorsion2} implies 
that the result of Theorem \ref{thpadic} does not hold in these groups.
If ${ M_ { n } ^ { \alpha } f }(x)$ converges then the tail $\ds \frac{f(x+n\aaa)}{n}\to 0$.
In our proofs the sets $\GGG_{f,0}$ (and $\GGG_{f,b}$), the sets of those $\aaa$'s where $\ds \frac{f(x+n\aaa)}{n}\to 0$, (or $\ds \frac{|f(x+n\aaa)|}{n}$ is bounded) for a.e. $x$ play an important role. Since $\GGG_{f}\sse \GGG_{f,0}\sse \GGG_{f,b}$ from $m(\GGG_{f})>0$ it follows that the other sets are also of
positive measure and hence in the statements of Theorems \ref{thconn} and \ref{thpadic}
these sets are used.  Again the tail of the ergodic averages plays an important role, like in
\cite{ABM}, where we showed that for $L^{1}$ functions and ordinary ergodic averages
the return time property for the tail may might fail and hence Bourgain's return time property
\cite{[bou]} does not hold in these situations.

The proof of Theorem \ref{thconn} is a rather straightforward generalization of
Theorem 1 in \cite{ETDS}. We provide its details, since they are also used with some
non-trivial modifications in the proof of Theorem \ref{thpadic}.

Next we say a few words about the background history and related questions to this paper.
Answering a question raised by the first listed author of this paper P. Major in \cite{Ma} constructed two
ergodic transformations 
 $S,T:X\to X$ 
on a probability space $(X,\mu)$
and a measurable function $ f:X\to  { \ensuremath { \mathbb R } }$ 
such that for $\mu$ a.e. $x$
$$\lim_{n\to { \infty }}{
\frac{1}{n+1}}\sum_{k=0}^{n}f(S^{k}x)= 0,\ \text{ and }
\lim_{n\to { \infty }}{\frac 1 {n+1}}
 \sum_{k=0}^{n}f(T^{k}x)= a\not=0.$$

M. Laczkovich  raised the question
 whether
$S$ and  $T$ can be
 irrational rotations of  $ { \ensuremath { \mathbb T } }$.
In Major's  example $S$ and $T$ are conjugate.
Therefore, his method did not provide an answer to Laczkovich's
question.

The results of Z. Buczolich in \cite{ego} imply that for any two
independent irrationals $\aaa$ and $\bbb$ one can find a measurable
$f:\T\to\R$ such that ${ M_ { n } ^ { \alpha } f }(x)\to c_{1}$
and ${ M_ { n } ^ { \beta } f }(x)\to c_{2}$ for a.e. $x$ with $c_{1}\not=c_{2}$.
In this case by Birkhoff's ergodic theorem $f\not\in L^{1}(\T)$.
It is shown in \cite{ETDS} that for any sequence $(\aaa_{j})$ of independent
irrationals  one can find a measurable $f:\T\to\R$ such that $f\not\in L^{1}(\T)$,
but $\aaa_{j}\in\GGG_{f}$ for all $j=1,...$. By Theorem 1 of \cite{ETDS} from
$f\not\in L^{1}(\T)$ it follows that $m(\GGG_{f})=0$. It was a natural question
to see how large $\GGG_{f}$ could be for an $f\not\in L^{1}(\T).$
In \cite{Sv} R. Svetic showed that $\GGG_{f}$ can be $c$-dense for an $f\not\in L^{1}(\T).$

The question about the possible largest Hausdorff dimension of $\GGG_{f}$ for an
$f\not\in L^{1}(\T)$ remained open for a while until in \cite{rot} it was shown
that there are $f\not\in L^{1}(\T)$ such that $\dim_{H}(\GGG_{f})=1$
(of course with $m(\GGG_{f})=0$.)

For us motivation to consider non-conventional ergodic averages in this paper
came from the project in \cite{SQA} concerning almost everywhere convergence questions
of Birkhoff averages along the squares.

It is also worth mentioning that ergodic averages of non-$L^{1}$ functions and
rotations on $\T$ were also considered in \cite{[SU1]} and \cite{[SU2]}.

\section{Preliminaries}
We suppose that $G$ is a compact Abelian topological group, the group operation will be addition. 
 The dual group of the compact Abelian topological group $G$ is denoted by $\widehat{G}$. By Pontryagin duality $\widehat{G}$ is a discrete Abelian group. For $\gamma \in \widehat{G}$ the corresponding Fourier coefficient is $$\widehat{f}(\gamma)=\int_G g(x) \gamma(-x) dm(x),$$ where $m$ denotes the Haar mesure on $G$. By the Parseval formula
$$\int_G f(x)\bar{g}(x)dm(x)=\sum_{\gamma \in \widehat{G}} \widehat{f}(\gamma)\overline{\widehat{g}(\gamma)}\ \text{ for $f,g \in L^2(G)$.}$$ 
By   \cite[24.25]{HR}  or \cite[2.5.6 Theorem]{Rudin}   if $G$ is a compact Abelian group then $G$ is connected if and only if $\widehat{G}$ is torsion-free. 

Suppose that $p_1, p_2,...$ is a sequence of prime numbers. Recall that the direct product $G=(Z/p_1)\times (Z/p_2)\times \dots$ is compact and its dual group $\widehat{G}=(Z/p_1)\bigoplus(Z/p_2) \bigoplus \dots$ is the direct sum with the discrete topology see 
\cite[2.2 p.36]{Rudin}
 or \cite{HR}.

 We denote by $Z_p$ the group of $p$-adic integers and its dual group, the Pr\"ufer $p$-group with the discrete topology will be denoted by $Z(p^{\infty})$. 
 
 For other properties of topological groups we refer to standard textbooks like \cite{Fuchs},
 \cite{HR} or \cite{Rudin}.
 
 Suppose that $f:G \rightarrow \R$ is a measurable function. We suppose that the group rotation $T_{\alpha}=x+\alpha, \; \  \alpha \in G$ is fixed. 
 
 Given a strictly monotone increasing sequence of integers $(n_k)$ we consider the nonconventional ergodic averages
$$M_N^{\alpha}f(x)=\frac{1}{N+1} \sum_{k=0}^N f(x+n_k{\alpha}).$$
Of course, if $n_k=k$ we have the usual Birkhoff averages. 

The $f$-rotation set is
$$\Gamma_f=\{\alpha \in G: M_N^{\alpha} f(x) \text{ converges for $m$ a.e. $ x$ as $N\to\oo$}\}.$$
As we mentioned in the introduction it
 was proved in \cite{ETDS} that if $G=\T , \: m=\lambda$, the Lebesgue measure on $\T $, and $n_k=k$ then for any measurable $f:\T  \rightarrow \R$ from $m(\Gamma_f)>0$ it follows that $f\in L^1(\T )$.

Scrutinizing the proof of this result one can see that the set
$$\Gamma_{f,0}=\Big \{\alpha \in G: \frac{f(x+n_k \alpha)}{k} \rightarrow 0 \text{ for $m$ a.e. $x$}\Big \}$$
played an important role.
It is obvious  that $\Gamma_f \subset \Gamma_{f,0}$. 

In \cite{ETDS} it was shown that from $m(\Gamma_{f,0})>0$ it follows that $f \in L^1(\T )$, when the sequence $n_k=k$ is considered. In this paper we will also use the slightly larger set
\begin{equation}\label{d3a}
\Gamma_{f,b}=\Big \{\alpha \in G: \limsup_{k \rightarrow \infty}\frac{|f(x+n_k \alpha)|}{k} < \infty  \text{ for $m$ a.e. $x$}\Big \}.
\end{equation}

\section{Main results}

First we generalize Theorem 1 of \cite{ETDS} for compact, locally connected Abelian groups.

\begin{theorem}\label{thconn} If $(n_k)$ is a strictly monotone increasing sequence of integers and $G$ is a compact, locally connected Abelian group and $f: G \rightarrow \R$ is a measurable function then from $m(\Gamma_{f,b})>0$ it follows that $f \in L^1(G).$
\end{theorem}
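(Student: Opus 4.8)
The plan is to argue by contraposition: assuming $f\notin L^1(G)$, I will show $m(\GGG_{f,b})=0$, which is equivalent to the statement. The first step is to record the structure of $G$ that makes things work. Since $G$ is locally connected it has a connected open neighbourhood of $0$, so the identity component $G_0$ is open, hence of finite index; set $Q=G/G_0$, a finite abelian group. Now $G_0$ is compact, connected and abelian, so by the duality criterion quoted in the preliminaries its dual $\widehat{G_0}$ is torsion-free; equivalently (multiplication by $n$ on $G_0$ is the Pontryagin dual of multiplication by $n$ on $\widehat{G_0}$, which is injective) $G_0$ is \emph{divisible}. A divisible abelian group is injective, so the extension $0\to G_0\to G\to Q\to 0$ splits, and since $Q$ is finite it splits topologically: $G\cong G_0\times Q$. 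Thus $f$ is a finite family $(f_q)_{q\in Q}$ of measurable functions on $G_0$ with $f_q(t)=f(t,q)$, Haar measure on $G$ is $m_{G_0}$ times normalized counting measure on $Q$, and $f\notin L^1(G)$ forces $f_{q_0}\notin L^1(G_0)$ for some $q_0\in Q$.

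The heart of the matter is the following claim, proved purely on $G_0$: \emph{for $(m_{G_0}\times m_{G_0})$-almost every $(t,\beta)\in G_0\times G_0$ one has $\limsup_k |f_{q_0}(t+n_k\beta)|/k=\infty$.} To prove it, fix $C\in\N$, let $A_{Ck}=\{y\in G_0:|f_{q_0}(y)|>Ck\}$ and $\cah_k=\{(t,\beta):t+n_k\beta\in A_{Ck}\}$. The shear $(t,\beta)\mapsto(t+n_k\beta,\beta)$ is a measure-preserving automorphism of $G_0\times G_0$, so $(m_{G_0}\times m_{G_0})(\cah_k)=m_{G_0}(A_{Ck})$, and because $f_{q_0}\notin L^1(G_0)$ a layer-cake estimate gives $\sum_k m_{G_0}(A_{Ck})=\infty$. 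The crucial point is decorrelation: for $j\ne k$ the continuous homomorphism $\psi_{jk}(t,\beta)=(t+n_j\beta,\,t+n_k\beta)\colon G_0\times G_0\to G_0\times G_0$ is \emph{onto} — one solves $(n_j-n_k)\beta=u-v$ using that $G_0$ is divisible and $n_j-n_k\neq0$, and then $t=u-n_j\beta$ — hence $\psi_{jk}$ pushes $m_{G_0}\times m_{G_0}$ to itself; since $\cah_j\cap\cah_k=\psi_{jk}^{-1}(A_{Cj}\times A_{Ck})$ this yields $(m_{G_0}\times m_{G_0})(\cah_j\cap\cah_k)=m_{G_0}(A_{Cj})\,m_{G_0}(A_{Ck})=(m_{G_0}\times m_{G_0})(\cah_j)\,(m_{G_0}\times m_{G_0})(\cah_k)$, i.e.\ the events $(\cah_k)$ are \emph{pairwise independent}. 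The second Borel--Cantelli lemma for pairwise independent events then gives $(m_{G_0}\times m_{G_0})(\limsup_k\cah_k)=1$, so for a.e.\ $(t,\beta)$ we have $|f_{q_0}(t+n_k\beta)|>Ck$ for infinitely many $k$, hence $\limsup_k|f_{q_0}(t+n_k\beta)|/k\ge C$; intersecting over $C\in\N$ proves the claim.

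To finish, suppose $m(\GGG_{f,b})>0$. Since $Q$ is finite, there are $p^*\in Q$ and $B^*\subseteq G_0$ with $m_{G_0}(B^*)>0$ and $(\beta,p^*)\in\GGG_{f,b}$ for all $\beta\in B^*$. Let $s$ be the order of $p^*$ and $K_j=\{k\ge1:n_k\equiv j\pmod s\}$, so $\{1,2,\dots\}=\bigsqcup_{0\le j<s}K_j$. For $\beta\in B^*$ and each $j$, testing the definition of $\GGG_{f,b}$ at the point $(t,\,q_0-jp^*)$ and keeping only those $k\in K_j$ — for which $n_kp^*=jp^*$ and hence $f_{(q_0-jp^*)+n_kp^*}=f_{q_0}$ — gives, for a.e.\ $t\in G_0$, $\limsup_{k\in K_j}|f_{q_0}(t+n_k\beta)|/k<\infty$. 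Taking the maximum over the finitely many $j$ shows that for $\beta\in B^*$ and a.e.\ $t$, $\limsup_k|f_{q_0}(t+n_k\beta)|/k<\infty$; this holds on a set of $(m_{G_0}\times m_{G_0})$-measure at least $m_{G_0}(B^*)>0$, contradicting the claim. Hence $m(\GGG_{f,b})=0$, i.e.\ $m(\GGG_{f,b})>0\Rightarrow f\in L^1(G)$.

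Regarding the difficulty: the argument is, as the authors indicate, not hard once one sees it, but it relies decisively on $G_0$ being \emph{divisible} — that is exactly what makes $\psi_{jk}$ surjective and the events $\cah_k$ pairwise independent, and it is exactly the property that fails in the presence of ``infinitely many multiple torsion'' in $\widehat G$, consistent with the negative results in the introduction. The one genuinely fiddly point is peeling off the finite quotient $Q=G/G_0$: one must be able to steer the coset-coordinate of the orbit $t\mapsto t+n_k\beta$ onto $q_0$, which is why one tests $\GGG_{f,b}$ at the shifted indices $q_0-jp^*$ and decomposes $\N$ into residue classes mod $s$. Finally, note that the proof directly produces unboundedness of the \emph{whole} tail $|f_{q_0}(t+n_k\beta)|/k$, so no extra work is needed to treat $\GGG_{f,b}$ rather than $\GGG_f$ or $\GGG_{f,0}$.
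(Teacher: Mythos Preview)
Your proof is correct and takes a genuinely different route from the paper's. The paper's argument for the connected case is harmonic-analytic: one builds an auxiliary bounded function $h(x,\alpha)$ whose average over a positive-measure set $B\subset\Gamma_{f,b}$ dominates $\chi_{L_{kK}(f)}$, and then Parseval together with the torsion-freeness of $\widehat{G}$ (so that the characters $\gamma^{n_k}$ are all distinct) forces $\sum_k m(L_{kK}(f))<\infty$. Your argument instead exploits the dual fact that a compact connected abelian group is \emph{divisible}; this makes the shears $\psi_{jk}(t,\beta)=(t+n_j\beta,t+n_k\beta)$ surjective, hence Haar-preserving, so the events $\{t+n_k\beta\in A_{Ck}\}$ are pairwise independent on $G_0\times G_0$, and the Erd\H{o}s--R\'enyi form of Borel--Cantelli finishes the job. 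Your approach is more elementary---no characters, no Parseval---and it makes very transparent why connectedness (equivalently, divisibility) is exactly what is needed. The trade-off is that the paper's Fourier machinery is what gets recycled, with nontrivial modifications, to handle the $p$-adic case in Theorem~\ref{thpadic}, where divisibility fails and your independence argument would not go through; it is not clear how to adapt your method there. One minor remark: your reduction from $G$ to $G_0$ via residue classes mod $s=\mathrm{ord}(p^*)$ is correct but a little heavier than necessary---the paper simply replaces $f$ by $f^*(x_1,x_2)=\sum_{q\in Q}|f(x_1,x_2+q)|$, which is constant in $x_2$ and still non-$L^1$, and then the second coordinate of $\alpha$ becomes irrelevant.
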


\begin{remark} Since $\Gamma_{f,b} \supset \Gamma_{f,0} \supset \Gamma_f$ Theorem \ref{thconn} implies that if one considers the non-conventional ergodic averages $M_N^{\alpha} f$ on a locally compact Abelian group for group rotations and $m(\Gamma_f)>0$ then $f \in L^1(G)$.
\end{remark}
\begin{proof} Set $n_0=0.$ First we suppose that $G$ is connected. Given an integer $K$ put
\begin{equation}\label{D4a}
G_{\alpha,K}=\{x:|f(x+n_k \alpha)|<K \cdot k \text{ for every } k>K 
\end{equation}
$$\text{ and } |f(x+n_k \alpha)|<K^2 \text{ for } k=0, \dots, K\}.$$
If $\alpha \in \Gamma_{f,b}$ then $m(G_{\alpha,K} )\rightarrow 1$ as $K \rightarrow \infty$. 

Choose and fix $K$ and $\varepsilon>0$ such that the set
\begin{equation}\label{D4b}
B=\{\alpha:m(G_{\alpha,K})> \varepsilon\}
\end{equation}
is of positive $m$-measure.
From the measurability of $f$ it follows that $B$ and the sets $g_{\alpha,K}$ are also measurable. 

Set
\begin{equation}\label{D4c}
L_k(f)=\{x \in G : |f(x)|>k\}.
\end{equation}
From $k>K$ and $x \in G_{\alpha,K}+n_k \alpha$ it follows that 
$$|f(x)|=|f(x-n_k\alpha+n_k\alpha)|<k\cdot K.$$ Set $H_{\alpha}=G\backslash G_{\alpha,K}$, (keep in mind that $K$ is fixed).
From $k>K$ and $x \in L_{k \cdot K} (f)$ it follows that $x \notin G_{\alpha,K}+n_k \alpha$, that is, $x \in H_{\alpha}+n_k \alpha.$ 

For $\alpha \in B$ we set $a(\alpha)=m(H_{\alpha})<1-\varepsilon$, by \eqref{D4b}. This implies $1/(1-a(\alpha))<1/\varepsilon$. 

For $\alpha \in B$ put
\begin{equation}\label{D5b}
h(x,\alpha)=\left\{
              \begin{array}{ll}
                1 \text{ if } x \in H_{\alpha}, & \hbox{} \\
                -\left(\frac{a(\alpha)}{1-a(\alpha)}\right) \text{ if } x \notin H_{\alpha}.& \hbox{}
              \end{array}
            \right.
\end{equation}
For $\alpha \notin B$  set $h(x, \alpha)=0$ for any $x \in G$.

 Then $h(x,\alpha)$ is a bounded measurable function defined on $G \times G$ and
\begin{equation}\label{D5a}
\int_G h(x, \alpha) dm(x)=0 \text{ for any } \alpha \in G.
\end{equation}
From $k>K$ and $x \in L_{k \cdot K}(f)$ it follows that $x \in H_{\alpha}+n_k\alpha$ for any $\alpha \in B$.
This implies
\begin{equation}\label{D6c}
h(x-n_k \alpha, \alpha)=1 \text{ for any } x\in L_{k \cdot K}(f) \text{ and } \alpha \in B.
\end{equation}
Taking average
\begin{equation}\label{D6a}
\frac{1}{m(B)} \int_B h(x-n_k\alpha,\alpha) dm(\alpha)=1 \text{ for } k>K \text{ and } x \in L_{k \cdot K}(f).
\end{equation}
Keep $\alpha$ fixed and select a character $\gamma \in \widehat{G}$. Consider in the Fourier-series of $h(x, \alpha)$ the coefficient $c_{\gamma}(\alpha)$ corresponding to this character, that is,

\begin{equation}\label{D6b}
c_{\gamma}(\alpha)=\int_G h(x,\alpha) \gamma(-x)dm(x).
\end{equation}
Since $h(x,\alpha)$ is a bounded measurable function, the function $c_{\gamma}(\alpha)$ is also bounded and measurable.
Then
\begin{equation}\label{D7a}
h(x,\alpha) \sim \sum_{\gamma \in \widehat{G}} c_{\gamma}(\alpha)\gamma(x).
\end{equation}
If $\gamma_0(x)\equiv 1$ then by \eqref{D5a} we have  
\begin{equation}\label{D7d}
c_{\gamma_0}(\alpha)=0 \text{ for any } \alpha \in G.
\end{equation}
$\text{ For a fixed }\alpha \in B \text{ we have }$
\begin{equation}\label{D7b}
h(x-n_k \alpha, \alpha) \sim \sum_{\gamma \in \widehat{G}} c_{\gamma} (\alpha) \gamma (-n_k \alpha) \gamma(x).
\end{equation}
By \eqref{D6a}
\begin{equation}\label{D7c}
m(L_{k \cdot K}(f)) \leq \int_G \left| \frac{1}{m(B)} \int_B h(x-n_k \alpha, \alpha)dm(\alpha)\right|^2dm(x)
\end{equation}
$$=\int_G|\varphi_k(x)|^2dm(x)=\circledast,$$
where $\ds \varphi_k(x)=\frac{1}{m(B)} \int_B h(x-n_k\alpha,\alpha)dm(\alpha)$ is a bounded measurable function. If $\gamma$ is a given character then using that $h$ is bounded and recalling \eqref{D6b} we obtain
\begin{equation}\label{F8a}
\begin{split}
&\widehat{\varphi}_k(\gamma)=\int_G \frac{1}{m(B)} \int_B h(x-n_k \alpha, \alpha) dm(\alpha) \gamma(-x)dm(x)
\\
&=\frac{1}{m(B)}\int_B  \int_G h(x-n_k \alpha, \alpha) \gamma(-x)dm(x)dm(\alpha)\\
&=\frac{1}{m(B)}\int_G \chi_B(\alpha)  \int_G h(u, \alpha) \gamma(-u-n_k \alpha)dm(u)dm(\alpha)\\
&=\frac{1}{m(B)}\int_G \chi_B(\alpha)  \gamma(-n_k \alpha)\int_G h(u, \alpha) \gamma(-u)dm(u)dm(\alpha)\\
&=\frac{1}{m(B)}\int_G \chi_B(\alpha)  \gamma(-n_k \alpha) c_{\gamma}(\alpha)dm(\alpha).
\end{split}
\end{equation}

By using the Parseval formula we can continue $\circledast$ in \eqref{D7c} to obtain
\begin{equation}\label{D9a}
\begin{split}
&m(L_{k \cdot K} (f)) \leq \sum_ {\gamma \in \widehat{G}}|\widehat{\varphi}_k(\gamma)|^2\\
&=\sum_{\gamma \in \widehat{G}} \frac{1}{(m(B))^2}\left|\int_G \chi_B(\alpha) \gamma (-n_k \alpha) c_{\gamma}(\alpha)dm(\alpha)\right|^2\\
&=\frac{1}{(m(B))^2}\sum_{\gamma \in \widehat{G}} \left|\int_G \chi_B(\alpha)c_{\gamma}(\alpha) \gamma^{n_k} (-\alpha) dm(\alpha)\right|^2.
\end{split}
\end{equation}
Since $\chi_B(\alpha)c_{\gamma}(\alpha)$ is a bounded measurable function and $\gamma^{n_k} \in \widehat{G}$, the expression $\int_G \chi_B(\alpha)c_{\gamma}(\alpha) \gamma^{n_k}(-\alpha)dm(\alpha)$ is a Fourier coefficient of this function. 

Now we use that $G$ is connected and hence $\widehat{G}$ is torsion-free. If $\gamma^{n_k}=\gamma^{n_{k'}}$ then $\gamma^{n_k-n_{k'}}=\gamma_0\equiv1$, but $\gamma$ is of infinite order and hence it is only possible if $n_k-n_{k'}=0$, that is $k=k'$. Hence for $k \neq k'$ the characters $\gamma^{n_k}$ and $\gamma^{n_{k'}}$ are different. By Parseval's formula for a fixed $\gamma \in \widehat{G}$
\begin{equation}\label{D10a}
\sum_{k=K}^{\oo}\left|\int_G \chi_B (\alpha)c_{\gamma}(\alpha) \gamma^{n_k}(-\alpha)dm(\alpha)\right|^2 \leq \int_G\left| \chi_B (\alpha)c_{\gamma}(\alpha)\right|^2dm(\alpha).
\end{equation}
This, Parseval's formula, \eqref{D5b}, \eqref{D6b} and \eqref{D9a} yield
\begin{equation}\label{D11a}
\begin{split}
&\sum_{k=K+1}^{\infty}m(L_{k \cdot K} (f))\leq \frac{1}{(m(B))^2} \sum_{\gamma \in \widehat{G}} \int_G \left|\chi_B(\alpha) c_{\gamma}(\alpha)\right|^2dm(\alpha)\\
&=\frac{1}{(m(B))^2}  \int_G \chi_B(\alpha)\sum_{\gamma \in \widehat{G}} \left|c_{\gamma}(\alpha)\right|^2dm(\alpha)\\&=\frac{1}{(m(B))^2}  \int_G \chi_B(\alpha)\int_G|h(x,\alpha)|^2dm(x)dm(\alpha)<\infty.
\end{split}
\end{equation}

Since $\ds \int_G|f| \leq K \cdot \sum_{k=0}^{\infty} m(L_{k \cdot K}(f))$ from \eqref{D11a} and $m(G)=1$ it follows that $f \in L^1(G)$. 

This completes the proof of the case of connected $G$. 

Next we show how  one can reduce the case of a locally connected $G$ to the connected case. If $G$ is locally connected then by \cite[24.45]{HR} if $C$ denotes the component of $G$ containing $O_G$ (the neutral element of $G$) then $C$ is an open subgroup of $G$ and $G$ is topologically isomorphic to $C \times (G/C)$. Since $G$ is compact $G/C$ should be finite. Suppose that its order is $n$. Using that $G=C \times (G/C)$ we write the elements of $G$ in the form $g=(g_1,g_2)$ with $g_1 \in C, \: g_2 \in G/C.$ 

Suppose that $f \notin L^1(G)$ is measurable and $m(\Gamma_{f,b})>0$. Set $$X_{\alpha,f}=\Big \{x \in G: \limsup_{k \rightarrow +\infty} \frac{|f(x+n_k \alpha)|}{k}< +\infty\Big \}.$$
If $\alpha \in \Gamma_{f,b}$ then $m(X_{\alpha,f})=1$. Suppose that $g^*_j, $ $ j=1,\dots,n$ is a list of all elements of $G/C$.

 For $x=(x_1,x_2)\in G$ define $$f^*(x)=f^*(x_1,x_2)=\sum_{j=1}^n|f(x_1, x_2+g^*_j)|.$$
Set $$X^*_{\alpha,f}= \bigcap_{j=1}^n\Big (X_{\alpha,f}+(0_C,g_j^*)).$$ Clearly $m(X_{\alpha,f})=1$ implies $m(X_{\alpha,f}^*)=1$.

 For $x \in X_{\alpha,f}^*$ we have $\ds \limsup_{k \rightarrow \infty} \frac{|f^*(x+n_k \alpha)|}{k} < +\infty.$ Since $f^*$ is not depending on its second coordinate we have $f^*(x+n_k(\alpha_1,\alpha_2))=f^*(x+n_k(\alpha_1,0_{G/C}))$. Define $f^{**}: C \rightarrow \R$ such that $f^{**}(x_1)=f^*(x_1, 0_{G/C})$. Since we assumed that $f \notin L^1(G)$ we have $f^* \notin L^1(G)$ and this implies $f^{**} \notin L^1(C)$.

 Set $$\Gamma_{f,b}^*=\pi_C(\Gamma_{f,b})=\{ \alpha_1: \exists \alpha_2 \in G/C \text{   such that }\aaa=(\alpha_1,\alpha_2) \in \Gamma_{f,b}\}.$$ Then for $\alpha_1 \in \Gamma_{f,b}^*$  we have
\begin{equation}\label{F12a}\limsup_{k \rightarrow \infty} \frac{|f^{**}(x_1+n_k \alpha_1)|}{k} < + \infty.
\end{equation}

Since the Haar measure on $C$ is a positive constant multiple of the Haar measure on $G$ 
restricted to $C$,
on the compact connected Abelian group $C$ we would obtain a measurable function $f^{**} \notin L^1(C)$ such that for a set of positive measure of rotations \eqref{F12a} holds. This would contradict the first part of this proof concerning connected groups.
\end{proof}
Theorem \ref{thconn} says that if we do not have ``too much  torsion" in $\widehat{G}$ then from $m(\Gamma_{f,b})>0$ it follows that $f \in L^1(G)$. In the next definition we define what we mean by ``a lot of torsion" in a group.

\begin{definition}\label{deftor}
We say that the group $G$ contains infinitely many multiple torsion if
\begin{enumerate}
  \item  either there is a prime number $p$ such that $G$ contains a subgroup algebraically isomorphic to the direct sum $(Z/p)\bigoplus(Z/p)\bigoplus \dots$ (countably many copies of $Z/p$),
  \item or there are infinitely many different prime numbers $p_1, p_2, \dots$ such that $G$ contains for any $j$ subgroups of the form $(Z/p_j) \times (Z/p_j)$.
\end{enumerate}
\end{definition}

\begin{theorem}\label{thtorsion}
Suppose that $(n_k)$ is a strictly monotone increasing sequence of integers and $G$ is a compact Abelian group such that its dual group $\widehat{G}$ contains infinitely many multiple torsion. Then there exists a measurable $f \notin L^1(G)$ such that
\begin{equation}\label{D16a}
m(\Gamma_{f,0})=m(\Gamma_{f,b})=1, \text{ where m is the Haar-measure on G. }
\end{equation}
In fact, we show that 
$\Gamma_{f,0}=\Gamma_{f,b}=G.$
\end{theorem}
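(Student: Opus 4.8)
The plan is to use Pontryagin duality to replace $G$ by a concrete product group on which $f$ can be written down explicitly, and then to exploit that the rotation orbit $\{n_{k}\alpha\}$ is confined to a very small, structured subset of that group.

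\textbf{Reduction to a product group.} First I would convert the hypothesis on $\widehat G$ into a continuous surjective homomorphism $\psi\colon G\to Q$ onto a concrete compact group $Q$. For a subgroup $H\le\widehat G$ (automatically closed, as $\widehat G$ is discrete) Pontryagin duality gives $\widehat H\cong G/H^{\perp}$, where $H^{\perp}\le G$ is the annihilator, so the quotient map realizes $G$ as mapping continuously onto $\widehat H$; moreover the dual of a discrete direct sum is the direct product of the duals. In case~(1) of Definition~\ref{deftor} this gives $\psi\colon G\twoheadrightarrow Q:=(Z/p)^{\N}=\prod_{j\ge1}Z/p$. In case~(2) the set of primes $p$ with $(Z/p)^{2}\hookrightarrow\widehat G$ is infinite, so I would first thin it, choosing $p_{1}<p_{2}<\cdots$ from that set with $p_{j}\ge2^{j}$ (so $\sum_{j}1/p_{j}<\infty$); since the $p_{j}$ are distinct primes the corresponding subgroup of $\widehat G$ splits as $\bigoplus_{j}(Z/p_{j})^{2}$, giving $\psi\colon G\twoheadrightarrow Q:=\prod_{j}(Z/p_{j})^{2}$. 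In either case the push-forward of Haar measure on $G$ under $\psi$ is Haar measure on $Q$, and for $f:=f_{0}\circ\psi$ one has $f\notin L^{1}(G)\iff f_{0}\notin L^{1}(Q)$ together with $f(x+n_{k}\alpha)=f_{0}(\psi(x)+n_{k}\psi(\alpha))$; hence it suffices to produce a measurable $f_{0}\colon Q\to\R$ that is finite a.e., not in $L^{1}(Q)$, and has $\Gamma_{f_{0},0}=Q$.

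\textbf{Case (1).} Here every element of $Q=(Z/p)^{\N}$ has order dividing $p$, so for each $\alpha$ the orbit $\{n_{k}\alpha:k\ge0\}$ lies in the finite group $\langle\alpha\rangle$, which has at most $p$ elements; consequently for \emph{any} a.e.-finite $f_{0}$ and any $\alpha$ one gets $|f_{0}(x+n_{k}\alpha)|\le\max_{g\in\langle\alpha\rangle}|f_{0}(x+g)|<\infty$ for a.e.\ $x$, whence $f_{0}(x+n_{k}\alpha)/k\to0$ and $\Gamma_{f_{0},0}=Q$ for free. It then remains only to exhibit one a.e.-finite $f_{0}\notin L^{1}(Q)$: take $f_{0}=\sum_{j\ge1}c_{j}\I_{D_{j}}$ with $D_{j}=\{x:x_{1}=\cdots=x_{\ell_{j}}=0\}$, where $\ell_{j}$ is chosen so that $m(D_{j})=p^{-\ell_{j}}\in[\,j^{-2}/p,\,j^{-2}\,]$, and $c_{j}=\lceil p^{\ell_{j}}/j\rceil$; then $\sum_{j}c_{j}m(D_{j})\ge\sum_{j}1/j=\infty$ gives $f_{0}\notin L^{1}(Q)$, while $\sum_{j}m(D_{j})\le\sum_{j}j^{-2}<\infty$ and the first Borel--Cantelli lemma give $f_{0}<\infty$ a.e.

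\textbf{Case (2).} Now $\{n_{k}\alpha\}$ can be infinite, so $f_{0}$ must be chosen carefully. Writing $x=((x_{j}',x_{j}''))_{j}$ with $(x_{j}',x_{j}'')\in(Z/p_{j})^{2}$, I would put $C_{j}=\{x:(x_{j}',x_{j}'')=(0,0)\}$, $c_{j}=\lceil p_{j}^{2}/j\rceil$, $f_{0}=\sum_{j}c_{j}\I_{C_{j}}$; as before $\sum_{j}c_{j}m(C_{j})\ge\sum_{j}1/j=\infty$ gives $f_{0}\notin L^{1}(Q)$, and $\sum_{j}m(C_{j})=\sum_{j}p_{j}^{-2}<\infty$ gives $f_{0}<\infty$ a.e. Now fix $\alpha=((\alpha_{j}',\alpha_{j}''))_{j}$. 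Since $f_{0}(x+n_{k}\alpha)=\sum_{\{j:\,(x_{j}',x_{j}'')=-n_{k}(\alpha_{j}',\alpha_{j}'')\}}c_{j}$ and for every $k$ this index set is contained in $F(x,\alpha):=\{j:(x_{j}',x_{j}'')\in\langle(\alpha_{j}',\alpha_{j}'')\rangle\}$, it is enough that $\sum_{j\in F(x,\alpha)}c_{j}<\infty$ for a.e.\ $x$. But $\langle(\alpha_{j}',\alpha_{j}'')\rangle\le(Z/p_{j})^{2}$ has order dividing $p_{j}$ --- here the \emph{squared} factors are essential: a nonzero element spans a subgroup of index $p_{j}$ --- so under Haar measure the events $\{(x_{j}',x_{j}'')\in\langle(\alpha_{j}',\alpha_{j}'')\rangle\}$ are independent with probabilities $\le1/p_{j}$; as $\sum_{j}1/p_{j}<\infty$, the first Borel--Cantelli lemma makes $F(x,\alpha)$ finite a.e. Hence $\sup_{k}f_{0}(x+n_{k}\alpha)<\infty$ a.e., so $f_{0}(x+n_{k}\alpha)/k\to0$; as $\alpha$ was arbitrary, $\Gamma_{f_{0},0}=Q$, and pulling back through $\psi$ gives a measurable $f\notin L^{1}(G)$ with $\Gamma_{f,0}=\Gamma_{f,b}=G$ (using $\Gamma_{f,0}\subset\Gamma_{f,b}\subset G$). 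I expect the delicate point to be precisely this case~(2) tail estimate: one must recognise that the ``doubling'' in Definition~\ref{deftor}(2) supplies exactly the factor $1/p_{j}$ which, after thinning the primes, is summable, so that the orbit $x+n_{k}\alpha$ visits only finitely many of the $C_{j}$ for a.e.\ $x$; once this structural fact (orbit lies in a line of each $(Z/p_{j})^{2}$) is isolated, the arbitrariness of $(n_{k})$ becomes immaterial, and the remaining ingredients --- the duality identity $\widehat H\cong G/H^{\perp}$, the splitting of $\widehat G$ into $p$-primary parts, and $\psi$-invariance of Haar measure --- are standard.
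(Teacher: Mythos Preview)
Your proof is correct and follows essentially the same route as the paper's: both construct $f$ as a sum $\sum c_{k}\I_{H_{k}}$ over a sequence of closed subgroups $H_{k}\le G$ of small measure, use the first Borel--Cantelli lemma for a.e.\ finiteness, and then exploit that $G/H_{k}$ has exponent $p$ (resp.\ $p_{k}$) so that the orbit $\{x+n_{k}\alpha\}$ modulo $H_{k}$ has at most $p$ (resp.\ $p_{k}$) points, which --- again via Borel--Cantelli with the summability of $1/p_{k}$ --- forces the orbit to miss all but finitely many $H_{k}$ for a.e.\ $x$. The only cosmetic differences are that the paper works directly in $G$, defining $H_{k}$ as intersections of kernels of characters rather than passing to the quotient $Q$, and that the paper takes the simpler weights $c_{k}=1/m(H_{k})$ (so $\int f_{k}=1$) rather than your $c_{j}\approx 1/(j\,m(H_{j}))$; neither choice affects the argument.
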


\begin{proof}
First suppose that in Definition \ref{deftor} property (i) holds for $\widehat{G}$. Then for any $k$ we can select a subgroup $\widehat{G}_k$ in $\widehat{G}$ such that it is isomorphic to $\underbrace{(Z/p) \times (Z/p) \times \dots \times (Z/p)}_{\text{$k$ many times}}$. Suppose that the characters $\gamma_1, \dots, \gamma_k$ are the generators of $\widehat{G}_k$.

 Put $H_k= \bigcap_{j=1}^k \gamma_j^{-1}(1)$. Then $H_k$ is a closed subgroup of $G$. Since $y \in x+H_k$, that is $y-x \in H_k$ if and only if $\gamma_j(y)=\gamma_j(x)$ for $j=1, \dots, k$, which means that $\gamma_j(y-x)=\gamma_j(y)/\gamma_j(x)=1$ for $j=1, \dots, k$ one can see that $G$ is tiled with $p^k$ many translated copies of $H_k$. The sets $x+H_k$ are all closed and therefore $H_k$ is a closed-open subgroup of $G$. 
 
 We also have
\begin{equation}\label{D17a}
m(H_k)=\frac{1}{p^k}.
\end{equation}
Set $f_k(x)=p^k$ if $x \in H_k$ and $f_k(x)=0$ otherwise. 

Put $f=\sum_{k=1}^{\infty} f_k$. By the Borel-Cantelli lemma and \eqref{D17a} the function $f$ is $m$ a.e. finite. It is also clear that $f$ is measurable and $f \notin L^1(G)$. 

Suppose $\alpha \in G$  is arbitrary. Set $X_k=\bigcup_{j=0}^{p-1} H_k-j\alpha$. Then $m(X_k)=p^{-k+1}$ and by the Borel-Cantelli lemma $m$ a.e. $x$ belongs to only finitely many $X_k$. If $x \notin X_k$ then $\forall j \in \N, \: x+j\alpha \notin H_k$ and hence
\begin{equation}\label{D18a}
f_k(x+j\alpha)=0 \text{ for any } j \in \N.
\end{equation}
Therefore, $\frac{f(x+n_k \alpha)}{k} \rightarrow 0$ for $m$ a.e.  $x \in G$ and  $\Gamma_{f,0}=G.$ 

If in Definition \ref{deftor} property (ii) holds for $\widehat{G}$ then for any $k$ select $\widehat{G}_k$ in $\widehat{G}$ such that it is isomorphic to $(Z/p_k) \times (Z/p_k)$. We suppose that $\gamma_{1,k}$ and $\gamma_{2,k}$ are the generators of $\widehat{G}_k$. Put $H_k=\gamma_{1,k}^{-1}(1) \cap \gamma_{2,k}^{-1}(1).$ One can see, similary to the previous case, that $G$ is tiled by $p_k^2$ many translated copies of $H_k$. Turning to a subsequence if necessary, we can suppose that
\begin{equation}\label{D19a}
\sum_{k=1}^{\infty} \frac{1}{p_k} <+\infty.
\end{equation}
We also have
\begin{equation}\label{D19b}
m(H_k)=\frac{1}{p_k^2}.
\end{equation}
Set $f_k(x)=p_k^2$ if $x \in H_k$ and $f_k(x)=0$ otherwise. 

Put $f= \sum_{k=1}^{\infty} f_k$. Again, it is clear that $f$ is $m$ a.e. finite, measurable and $f \notin L^1(G)$. For an arbitrary $\alpha \in G$ one can define $X_k=\cup_{j=0}^{p_k-1} H_k-j \alpha$. Then $m(X_k)=\frac{1}{p_k}$.

 From \eqref{D19a} and from the Borel-Cantelli lemma it follows that $m$ a.e. $x$ belongs to only finitely many $X_k$. One can conclude the proof as we did it in the previous case.
\end{proof}
It is natural to ask for a version of Theorem \ref{thtorsion} for the non-conventional ergodic averages with $m(\Gamma_f)=1$ in \eqref{D16a}. For convergence of the non-conventional ergodic averages some arithmetic assumptions about $n_k$ are needed. \\
We recall from \cite{RW}  Definition 1.2 with some notational adjustment. 
\begin{definition}\label{ergoseq}. The sequence $(n_k)$ is ergodic ${\mathrm{mod}} \: q$ if for any $h \in \Z$
\begin{equation}\label{F18a}
\lim_{N \rightarrow \infty} \frac{\sum_{k=0}^N \chi_{h,q} (n_k)}{N+1} = \frac{1}{q},
\end{equation}
Where $\chi_{h,q}(x)=1$ if $x \equiv h \: {\mathrm{mod}} \: q$ and $\chi_{h,q}(x)=0$ otherwise.\\
A sequence $(n_k)$ is ergodic for periodic systems if it is ergodic ${\mathrm{mod}} \: q$ for every $q \in \N$.
\end{definition}

For ergodic sequences with essentially the same proof we can state the following version of Theorem \ref{thtorsion}:
\begin{theorem}\label{thtorsion2}
Suppose that $n_k$ is a strictly monotone, ergodic sequence for periodic systems and $G$ is a compact Abelian group such that its dual group $\widehat{G}$ contains infinitely many multiple torsion. Then there exists a measurable $f \notin L^1(G)$such that $\Gamma_f=G$, and hence $m(\Gamma_f)=1$.
\end{theorem}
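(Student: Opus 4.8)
The plan is to reuse verbatim the construction from the proof of Theorem~\ref{thtorsion}. In case~(i) of Definition~\ref{deftor} we keep the closed--open subgroups $H_k=\bigcap_{j=1}^k\gamma_j^{-1}(1)$ of index $p^k$ and set $f_k=p^k\chi_{H_k}$; in case~(ii) we keep $H_k=\gamma_{1,k}^{-1}(1)\cap\gamma_{2,k}^{-1}(1)$ of index $p_k^2$ (after passing to a subsequence so that $\sum_k 1/p_k<\infty$) and set $f_k=p_k^2\chi_{H_k}$; in both cases $f=\sum_{k=1}^\infty f_k$. As established there, $f$ is $m$-a.e.\ finite, measurable and $f\notin L^1(G)$ (indeed $\int_G f_k\,dm=1$ for every $k$). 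The only new point, compared with Theorem~\ref{thtorsion}, is that we must upgrade the statement ``$f_k(x+n_j\alpha)/k\to 0$'' to the genuine convergence of $M_N^\alpha f(x)$, and this is exactly where the hypothesis that $(n_k)$ is ergodic for periodic systems enters.

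Fix $\alpha\in G$. The first step repeats the Borel--Cantelli argument used for $\Gamma_{f,0}$: with $X_k=\bigcup_{j=0}^{p-1}(H_k-j\alpha)$ in case~(i), resp.\ $X_k=\bigcup_{j=0}^{p_k-1}(H_k-j\alpha)$ in case~(ii), one has $m(X_k)=p^{-k+1}$, resp.\ $1/p_k$, so $\sum_k m(X_k)<\infty$ and $m$-a.e.\ $x$ lies in only finitely many $X_k$. Since each generating character has order $p$, resp.\ $p_k$, we get $p\alpha\in H_k$, resp.\ $p_k\alpha\in H_k$; hence $j\mapsto f_k(x+j\alpha)$ is periodic in $j\in\Z$ with period dividing $p$, resp.\ $p_k$, so that $x\notin X_k$ forces $f_k(x+j\alpha)=0$ for \emph{every} $j$. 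Consequently, for $m$-a.e.\ $x$ there is a $k_0=k_0(x,\alpha)$ with $f_k(x+n_j\alpha)=0$ for all $k\ge k_0$ and all $j$, so that $M_N^\alpha f(x)=\sum_{k=1}^{k_0-1}M_N^\alpha f_k(x)$ is a finite sum whose number of terms does not depend on $N$.

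The second step shows that for each fixed $k$ the sequence $M_N^\alpha f_k(x)$ converges. Because $f_k$ is constant on the cosets of the finite-index subgroup $H_k$, the value $f_k(x+n_j\alpha)$ depends only on the image of $n_j\alpha$ in the finite group $G/H_k$, hence only on $n_j$ modulo $d$, where $d$ is the order of the image $\bar\alpha$ of $\alpha$ in $G/H_k$ (a divisor of $p^k$, resp.\ $p_k^2$). Writing $c_h$ for the common value of $f_k(x+n_j\alpha)$ over those $j$ with $n_j\equiv h\pmod d$, we obtain
\begin{equation*}
M_N^\alpha f_k(x)=\sum_{h=0}^{d-1}c_h\,\frac{\#\{\,0\le j\le N:\ n_j\equiv h\pmod d\,\}}{N+1}\ \xrightarrow[N\to\infty]{}\ \frac1d\sum_{h=0}^{d-1}c_h,
\end{equation*}
by Definition~\ref{ergoseq} applied with $q=d$. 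Combining the two steps, $M_N^\alpha f(x)$ converges for $m$-a.e.\ $x$; since $\alpha\in G$ was arbitrary, $\Gamma_f=G$, and in particular $m(\Gamma_f)=1$.

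I do not expect a serious obstacle. The construction is engineered so that, for fixed $x$ and $\alpha$, only finitely many $f_k$ contribute along the orbit, hence no uniformity in $k$ — and no interchange of $\lim_{N}$ with $\sum_k$ — is required; the single place where the arithmetic assumption on $(n_k)$ is genuinely used is the limit computation in the second step, and that is immediate from the definition of an ergodic sequence for periodic systems.
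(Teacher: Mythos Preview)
Your argument is correct and follows the same route as the paper's own proof: reuse the construction of Theorem~\ref{thtorsion}, invoke the Borel--Cantelli step to reduce $M_N^\alpha f(x)$ to a finite sum $\sum_{k<k_0}M_N^\alpha f_k(x)$ for a.e.\ $x$, and then observe that each summand converges because $f_k$ factors through a finite quotient and $(n_k)$ is ergodic for periodic systems. The paper states this last point in a single sentence, whereas you spell out the ``depends only on $n_j\bmod d$'' mechanism explicitly; a minor cosmetic remark is that in your second step the order $d$ of $\bar\alpha$ in $G/H_k$ actually divides $p$ (resp.\ $p_k$), as you yourself noted in step one, rather than $p^k$ (resp.\ $p_k^2$), but this does not affect the argument.
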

\begin{proof}
As we mentioned earlier the argument of the proof of Theorem \ref{thtorsion} is applicable. One needs to add the observation that if $x\in X_k$ then the ergodicity of $n_k$ for periodic systems implies that $M_N^{\alpha} f_k$ converges. If $x \notin X_k$ then \eqref{D18a} can be used. Hence $M_N^{\alpha} f$ converges for all $\alpha \in G$ for a.e. $x$.
\end{proof}

In Theorem \ref{thtorsion} we saw that if there is ``lots of torsion" in $\widehat{G}$, that is, $G$ is "highly disconnected" then there are measurable functions $f$ not in $L^1$ for which $m(\Gamma_{f,0})=1$. Since the $p$-adic integers, $Z_p$ are the building blocks of $0$-dimensional compact Abelian groups (\cite[Theorem 25.22]{HR}) it is natural to consider them. If we take a countable product of $Z_p$ with $p$ fixed then the dual group will be the direct sum of $Z(p^{\infty})$'s  and will contain a subgroup algebraically isomorphic to the direct sum $(Z/p)\bigoplus(Z/p)\bigoplus \dots$. Then Theorem \ref{thtorsion} is applicable.

 If one considers an individual $Z_p$ then its dual group is $Z(p^{\infty})$ with all elements of finite order, so still there seems to be ``lots of torsion" in the dual group. It is also clear that arithmetic properties of $n_k$ might matter if we consider $Z_p$. For us it was quite surprising that if one considers ordinary ergodic averages, that is, $n_k=k$ then $Z_p$ behaves like a locally connected group and the following theorem is true.
\begin{theorem}\label{thpadic}
Suppose that $n_k=k$, and $p$ is a fixed prime number. We consider $G=Z_p$, the group of $p$-adic integers. Then for any measurable function $f:G \rightarrow \R$ from $m(\Gamma_{f,b})>0$ it follows that $ f \in L^1(G)$.
\end{theorem}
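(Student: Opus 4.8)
The plan is to mimic the proof of Theorem \ref{thconn} as closely as possible, so the first step is to run through exactly the same construction: for each integer $K$ define the sets $G_{\alpha,K}$ as in \eqref{D4a} (now with $n_k=k$), pick $K$ and $\varepsilon>0$ so that $B=\{\alpha:m(G_{\alpha,K})>\varepsilon\}$ has positive measure, set $H_\alpha=G\setminus G_{\alpha,K}$ and $a(\alpha)=m(H_\alpha)<1-\varepsilon$, and form the mean-zero bounded function $h(x,\alpha)$ of \eqref{D5b}. As before one obtains $h(x-k\alpha,\alpha)=1$ for $x\in L_{kK}(f)$ and $\alpha\in B$, so averaging over $B$ gives $m(L_{kK}(f))\le\int_G|\varphi_k(x)|^2\,dm(x)$ with $\widehat{\varphi}_k(\gamma)=\frac1{m(B)}\int_G\chi_B(\alpha)\gamma(-k\alpha)c_\gamma(\alpha)\,dm(\alpha)$, exactly as in \eqref{F8a}. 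Applying Parseval in $x$ yields
\begin{equation*}
\sum_{k=K+1}^{\infty} m(L_{kK}(f))\le \frac{1}{m(B)^2}\sum_{\gamma\in\widehat G}\ \sum_{k=K+1}^{\infty}\Bigl|\int_G \chi_B(\alpha)c_\gamma(\alpha)\,\gamma^{k}(-\alpha)\,dm(\alpha)\Bigr|^2 ,
\end{equation*}
and the game is won as soon as, for each fixed $\gamma\in\widehat G$, the inner sum over $k$ is bounded by $C_\gamma\int_G|\chi_B(\alpha)c_\gamma(\alpha)|^2\,dm(\alpha)$ with $\sum_\gamma C_\gamma/m(G)=\sum_\gamma C_\gamma$ controlled — ideally $C_\gamma$ uniformly bounded, exactly as \eqref{D10a}.

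The obstruction — and this is the whole point of the theorem — is that $\widehat{G}=Z(p^\infty)$ is a torsion group, so the characters $\gamma^{k}$ are no longer distinct as $k$ ranges over $\N$: if $\gamma$ has order $p^{m}$ then $\gamma^{k}$ depends only on $k \bmod p^{m}$, and each value is attained for a positive-density set of $k$. So the naive Bessel/Parseval bound \eqref{D10a} fails badly; the left-hand sum over $k$ is infinite for any $\gamma$ with $c_\gamma(\alpha)\not\equiv 0$ on $B$. The key new idea must be to exploit that $n_k=k$ runs through \emph{every} residue class mod $p^m$ with exact density $p^{-m}$ (i.e. the sequence $k$ is ergodic for periodic systems in the sense of Definition \ref{ergoseq}), so that averaging over $k$ already performs a Cesàro average; one should therefore not try to bound $\sum_k|\widehat\varphi_k(\gamma)|^2$ term by term but rather estimate the Cesàro averages $\frac1{N+1}\sum_{k=0}^N m(L_{kK}(f))$ and push $N\to\infty$. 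Concretely, I would split $\widehat G$ into the finite pieces $\widehat G=\bigcup_m \{\gamma:\ \mathrm{ord}(\gamma)\mid p^m\}$ (note $Z(p^\infty)$ is exactly the increasing union of cyclic groups of order $p^m$), and for a character of order $p^m$ replace the individual terms $\gamma^{k}(-\alpha)=\gamma^{k\bmod p^m}(-\alpha)$ by grouping the $k$'s according to their residue; the decay of $m(G_{\alpha,K}\triangle(G_{\alpha,K}+\ell\alpha))$ or, more to the point, the structure of $Z_p$ (the subgroup $p^m Z_p$ has index $p^m$ and $\gamma$ is trivial on it) should let one convert the condition "$x\in L_{kK}(f)$" averaged over $k$ into a genuine $L^2$ bound.

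The cleanest route, I expect, is this: fix $m$, and for $\gamma$ of order exactly $p^m$ note that $\gamma$ factors through the finite quotient $Z_p/p^mZ_p\cong Z/p^m$. Restricting attention to the finite group $Z/p^m$ on the $\alpha$-side and using that $k\mapsto k\bmod p^m$ hits each residue equally often, the Cesàro average of $\gamma^k(-\alpha)$ over $k$ in a long block is (up to $o(1)$) the average of $\gamma^j(-\alpha)$ over $j\in Z/p^m$, which is $0$ unless $\gamma(-\alpha)=1$ i.e. $\alpha\in p^mZ_p$ modulo torsion. This means the "bad" contribution is concentrated on $\alpha$ in the index-$p^m$ subgroup, whose measure is $p^{-m}$, and summing the geometric series $\sum_m$ (number of characters of order $p^m$, which is $p^m-p^{m-1}$) against the measure $p^{-m}$ of the relevant $\alpha$-set must be arranged to converge — this is exactly the $p$-adic analogue of the counting that makes Theorem \ref{thtorsion} fail for products of $Z_p$'s but succeed for a single $Z_p$, since a single $Z_p$ has only \emph{one} copy of each torsion level, not "multiple" torsion. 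The main technical obstacle will be making the $o(1)$ from the density statement uniform enough in $\gamma$ and in $\alpha$ to survive the summation over the infinitely many characters $\gamma$, which is why one needs to organize the estimate as a limit of Cesàro averages of $m(L_{kK}(f))$ rather than a single sum; handling the tail characters (large $m$) will require the decay $m(H_\alpha\cap(H_\alpha+\ell\alpha))\to$ something small, or equivalently a quantitative use of the finite-index subgroup structure of $Z_p$, and this is where the proof genuinely departs from the connected case and needs the non-trivial modifications the authors allude to.
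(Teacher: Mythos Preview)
Your setup through \eqref{F8a} is correct and matches the paper, and you have correctly located the obstruction: for $\gamma$ of order $p^m$ the map $k\mapsto\gamma^k$ is periodic, so the Bessel bound \eqref{D10a} is unavailable. But the route you sketch from there has a genuine gap and misses the two ideas that actually make the proof go through.

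First, the Cesàro strategy cannot close the argument. To conclude $f\in L^1(G)$ you need summability of $m(L_{kK}(f))$ (or an equivalent such as $\sum_\kappa p^\kappa m(L_{p^{\kappa+2}K}(f))<\infty$, which is what the paper proves). Bounding $\frac{1}{N+1}\sum_{k=0}^N m(L_{kK}(f))$ and letting $N\to\infty$ can at best show the average tends to $0$; that is far weaker than summability and does not imply $f\in L^1$. Your later remarks about the bad contribution being concentrated on $\alpha\in p^m Z_p$, and about counting $p^m-p^{m-1}$ characters against the measure $p^{-m}$, do not add up to a convergent estimate either: you would be summing $\sum_m (p^m-p^{m-1})\cdot p^{-m}=\sum_m (1-1/p)=\infty$.

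Second, you are missing the reduction that removes the ``bad $\alpha$'' entirely. The paper first shows (Claim~\ref{gfb0}) that one may assume $m(\Gamma_{f,b}^{-1})>0$, i.e.\ $B$ can be chosen inside $\{\alpha:\alpha_0\neq 0\}$. For such $\alpha$ and for every nontrivial $\gamma\in Z(p^\infty)$ one has $\gamma(-\alpha)\neq 1$; in particular $B\cap p^m Z_p=\varnothing$ for all $m\ge 1$, so the set you were going to treat as the obstruction is simply absent from $B$. With this in hand, the paper does not Cesàro-average $m(L_{kK}(f))$ but instead \emph{block-averages the function}: it sets
\[
h_\kappa(x,\alpha)=\frac{1}{p^\kappa}\sum_{k=p^\kappa}^{2p^\kappa-1}h(x-k\alpha,\alpha),
\]
so that on $B$ the Fourier coefficient $c_{\gamma,\kappa}(\alpha)$ vanishes whenever $\mathrm{ord}(\gamma)\le p^\kappa$ (a geometric-series cancellation using $\gamma(-\alpha)\ne 1$). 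Only characters with $\mathrm{ord}(\gamma)\ge p^{\kappa+1}$ survive, and for those the powers $\gamma^j$ with $p^\kappa\le j<p^{\kappa+1}$ are distinct, so Parseval applies on each dyadic block. Summing over $\kappa$ the blocks $[p^\kappa,p^{\kappa+1})$ are disjoint (for $p\ge 3$; a factor $2$ is lost for $p=2$), and one obtains $\sum_\kappa p^\kappa m(L_{p^{\kappa+2}K}(f))\le \frac{1}{m(B)^2}\int_B\|h(\cdot,\alpha)\|_2^2\,dm(\alpha)<\infty$, which gives $f\in L^1$. The decay of $m(H_\alpha\cap(H_\alpha+\ell\alpha))$ that you anticipate plays no role.
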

Before turning to the proof of Theorem \ref{thpadic} we need some notation and a Claim simplifying the proof of Theorem \ref{thpadic}. Denote by $\Gamma_{f,b}^j, \: j=-1,0,1, \dots$ the set of those $\alpha=(\alpha_0, \alpha_1, \dots)\in \Gamma_{f,b}$ for which $\alpha_{j+1} \neq 0$ but $\alpha_0=\dots=\alpha_j=0$. From $m(\Gamma_{f,b})>0$ it follows that there exists $j_0$ such that $m(\Gamma_{f,b}^{j_0})>0$. Given a finite string $(x_0, \dots,x_j)$ we denote by $[x_0, \dots, x_j]$ the corresponding cylinder set in $G$, that is, $$[x_0, \dots, x_j]=\{(x_0', x_1', \dots)\in G: (x_0',\dots,x_j')=(x_0, \dots,x_j)\}.$$

\begin{claim}\label{gfb0}
If from $m(\Gamma_{f,b}^{-1})>0$ it follows that $f \in L^1(G)$, then Theorem \ref{thpadic} is also true.
\end{claim}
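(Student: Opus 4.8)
The plan is to reduce the general case $m(\GGG_{f,b}^{j_0})>0$ (for some $j_0\ge 0$) to the base case $m(\GGG_{f,b}^{-1})>0$ by ``peeling off'' the first $j_0+1$ coordinates. The key structural fact about $Z_p$ that I would exploit is that the cylinder $H=[0,0,\dots,0]$ (with $j_0+1$ leading zeros) is a closed-open subgroup of $Z_p$ of index $p^{j_0+1}$, and it is itself topologically isomorphic to $Z_p$ via the shift map $\sigma:(x_{j_0+1},x_{j_0+2},\dots)\mapsto (x_0',x_1',\dots)$. Moreover multiplication by $p^{j_0+1}$ on $Z_p$ is exactly the composition of this isomorphism with the inclusion $H\hookrightarrow Z_p$; that is, $\alpha\in\GGG_{f,b}^{j_0}$ means $\alpha=p^{j_0+1}\beta$ for some $\beta$ with nonzero leading digit, so rotations by such $\alpha$ are genuine rotations "inside" the sub-$Z_p$ given by $H$.

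First I would fix $j_0$ with $m(\GGG_{f,b}^{j_0})>0$ and decompose $G$ as a disjoint union of the $p^{j_0+1}$ cylinders $C_s=[s_0,\dots,s_{j_0}]$, each a translate of $H$. Since $m(\GGG_{f,b}^{j_0})>0$ and rotation by $\alpha=p^{j_0+1}\beta$ preserves each coset $C_s$, for a.e.\ $x$ the condition $\limsup_k |f(x+k\alpha)|/k<\infty$ only involves the values of $f$ on the single coset containing $x$. Next, analogously to the locally-connected reduction in the proof of Theorem~\ref{thconn}, I would form the function $f^*(x)=\sum_{s}|f(x+(s-O_G))|$ summed over coset representatives $s$ of $H$ in $G$ — so $f^*$ is $H$-periodic (constant on $H$-cosets) — and note that if $f\notin L^1(G)$ then $f^*\notin L^1(G)$, hence its restriction $f^{**}$ to $H$ is not in $L^1(H)$ (the Haar measure of $H$ being a positive multiple of $m|_H$). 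The set $\GGG_{f,b}^{j_0}$, transported through the isomorphism $H\cong Z_p$ and the identification $\alpha\leftrightarrow\beta$, becomes a positive-measure set of rotations $\beta$ of the copy of $Z_p$ for which $\limsup_k|f^{**}(y+k\beta)|/k<\infty$ a.e.; and because the leading digit of $\beta$ is nonzero, this positive-measure set meets $\GGG_{f^{**},b}^{-1}$ in positive measure (a.e.\ such $\beta$ lies in $\GGG_{f^{**},b}^{-1}$ once one discards the measure-zero set where the leading digit vanishes — but in fact $\alpha\in\GGG_{f,b}^{j_0}$ forces $\beta$ to have nonzero leading digit by definition). Applying the hypothesis of the Claim to $f^{**}$ on this copy of $Z_p$ yields $f^{**}\in L^1$, a contradiction.

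The step I expect to be the main obstacle is verifying carefully that the convergence/boundedness information for $f$ transfers to $f^*$ and then to $f^{**}$ without loss — specifically, that replacing $f$ by the coset-sum $f^*$ does not destroy the $\GGG_{f,b}$ hypothesis. This is where one must intersect translates of $X_{\alpha,f}$ by the finitely many coset representatives (as in the $X^*_{\alpha,f}$ construction in Theorem~\ref{thconn}) and use that a finite sum of functions each with bounded tail along the orbit has bounded tail along the orbit; one must also check the measure-theoretic bookkeeping (Haar measure on $H$ vs.\ $m|_H$, and the pushforward of $\GGG_{f,b}^{j_0}$ under the coordinate shift being measurable of positive measure). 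The arithmetic point that $n_k=k$ and multiplication by $p^{j_0+1}$ interact cleanly — $k\cdot(p^{j_0+1}\beta)=p^{j_0+1}(k\beta)$, so the orbit of $x$ under rotation by $\alpha$ stays in one coset and matches the orbit of $\sigma(x-s)$ under rotation by $\beta$ — is what makes the reduction go through, and it is essentially the only place the hypothesis $n_k=k$ (rather than a general ergodic sequence) is needed in this particular reduction.
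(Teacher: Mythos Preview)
Your reduction via the shift $\sigma^{j_0+1}$ is the right idea and is essentially the same mechanism the paper uses, but one claim is incorrect: the function $f^*(x)=\sum_s|f(x+s)|$ is \emph{not} $H$-periodic. The exact sequence $0\to p^{j_0+1}Z_p\to Z_p\to Z/p^{j_0+1}\to 0$ does not split (there is no torsion in $Z_p$), so unlike the locally-connected case $G=C\times(G/C)$ there is no product decomposition; translating $x$ by $h\in H$ moves each point $x+s$ within its coset rather than permuting the summands. Fortunately the periodicity is not needed: since $y\mapsto y+s$ carries $H$ bijectively and measure-preservingly onto $s+H$, one computes directly $\int_H f^*\,dm=\sum_s\int_{s+H}|f|\,dm=\int_G|f|\,dm$, so $f^{**}:=f^*|_H\notin L^1$ iff $f\notin L^1(G)$, and the rest of your transfer argument (intersecting translates of $X_{\alpha,f}$, pushing $\GGG_{f,b}^{j_0}$ forward by $\sigma^{j_0+1}$) goes through unchanged. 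A small side remark: this reduction does not actually use $n_k=k$; since $H$ is a subgroup, $n_k\alpha\in H$ whenever $\alpha\in H$, and $\sigma^{j_0+1}(x+n_k\alpha)=\sigma^{j_0+1}x+n_k\sigma^{j_0+1}\alpha$ for any sequence $(n_k)$.

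The paper's own proof is slightly leaner. Instead of summing over coset representatives and arguing by contradiction, it directly defines for each cylinder the restriction $f_{x_0,\dots,x_{j_0}}(x')=f(x_0,\dots,x_{j_0},x')$, observes that $\sigma^{j_0+1}(\GGG_{f,b}^{j_0})\subset\GGG_{f_{x_0,\dots,x_{j_0}},b}^{-1}$, and applies the hypothesis separately to each restriction to obtain $f\in L^1([x_0,\dots,x_{j_0}])$; summing over the finitely many cylinders gives $f\in L^1(G)$. Your $f^{**}$ is exactly $\sum_{(x_0,\dots,x_{j_0})}|f_{x_0,\dots,x_{j_0}}|$, so the two arguments are equivalent, but the paper's avoids the contrapositive and the (spurious) periodicity step.
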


\begin{proof}
As mentioned above if $m(\Gamma_{f,b})>0$ then we can choose $j_0$ such that $m(\Gamma_{f,b}^{j_0})>0$. Then for $\alpha \in \Gamma_{f,b}^{j_0}$ for any cylinder $[x_0, \dots, x_{j_0}]$ we have $[x_0, \dots, x_{j_0}]+\alpha=[x_0, \dots, x_{j_0}]$. If $\sigma$ is the one-sided shift on $Z_p$, that is, $\sigma(x_0,x_1,\dots)=(x_1,\dots)$ then for $\alpha \in \Gamma_{f,b}^{j_0}$ we have $\sigma^{j_0+1}(x+\alpha)=\sigma^{j_0+1}x+\sigma^{j_0+1}\alpha$. 

For an $x' \in G$ we define the function $f_{x_0, \dots, x_{j_0}}(x')=f(x_0, \dots, x_{j_0}, x')$, where $(x_0, \dots, x_{j_0}, x')$ is the concatenation of the finite string $(x_0, \dots, x_{j_0})$ and $x' \in G=Z_p$. Then $\Gamma^{-1}_{f_{x_0,\dots,x_{j_0}},b}\supset \sigma^{j_0+1}(\Gamma_{f,b}^{j_0})$ and we can apply the Claim for $f_{x_0, \dots,x_{j_0}}$ to obtain that $f_{x_0, \dots,x_{j_0}} \in L^1(G)$, that is, $f \in L^1([x_0, \dots,x_{j_0}])$. Since this holds for any cylinder set $[x_0, \dots,x_{j_0}]$ we obtain that $f \in L^1(G)$.
\end{proof}

\begin{proof}[Proof of Theorem \ref{thpadic}]
By Claim \ref{gfb0} we can assume that $m(\Gamma^{-1}_{f,b})>0$. We need to adjust the proof of Theorem \ref{thconn} for the case of $G=Z_p$. The key difficulty is the torsion in $\widehat{G}=Z(p^{\infty})$ which makes it impossible to use a direct argument which lead to \eqref{D10a}. Anyway, we start to argue as in the proof of Theorem \ref{thconn}, keeping in mind that now $n_k=k$. We introduce the sets $G_{\alpha,K}, \: B \subset \Gamma_{f,b}^{-1},\: L_k(f)$ as in \eqref{D4a}, \eqref{D4b}, and \eqref{D4c}, respectively. We fix $K$ and define the set $H_{\alpha}$ and the auxiliary function $h(x,\alpha)$ as in \eqref{D5b}. We have \eqref{D5a} again. 

Our aim is to establish that for a suitable $\kappa_0$
\begin{equation}\label{E4a}
\sum_{\kappa\geq \kappa_0} p^{\kappa} m(L_{p^{\kappa+2}\cdot K}(f))< \infty.
\end{equation}

Suppose that the function $\varphi$ equals $p^{\kappa+3} \cdot K$ on $L_{p^{\kappa+2}\cdot K}(f)\backslash L_{p^{\kappa+3}\cdot K}(f), \: \kappa=\kappa_0,\kappa_0+1, \dots$ and equals $K \cdot p^{\kappa_0+2}$ on $G \backslash L_{p^{\kappa_0+2} \cdot K} (f)$. Then $\varphi \geq |f|$ and by \eqref{E4a}
 \begin{equation}\label{E4b}
 \int_{G} \varphi dm \leq K \cdot p^{\kappa_0+2} m(G)+ \sum_{\kappa=\kappa_0}^{\infty} p^{\kappa+3} \cdot K m(L_{p^{\kappa+2} \cdot K}(f))< +\infty.
 \end{equation}
This implies that $f \in L^1(G)$. 

Hence we need to establish \eqref{E4a}. Choose and fix $\kappa_0 \in \N$ such that $p^{\kappa_0}>K$ and suppose that $\kappa \geq \kappa_0$. 

Then, keeping in mind that $L_{k\cdot K}(f)\supset L_{p^{\kkk+2}\cdot K}(f)$ for 
$k\leq p^{\kkk+2}$ we have instead of \eqref{D6c}
\begin{equation}\label{D32a}
h(x-k\alpha,\alpha)=1 \text{ for any } \alpha \in B, \: K<k<p^{\kappa+2} \text{ and } x\in L_{p^{\kappa+2}\cdot K}(f).
\end{equation}
Let
\begin{equation}\label{D32b}
h_{\kappa}(x,\alpha)=\frac{1}{p^{\kappa}} \sum_{k=p^{\kappa}}^{2p^{\kappa}-1} h(x-k\alpha,\alpha).
\end{equation}
Then by \eqref{D32a}
\begin{equation}\label{D32c}
h_{\kappa}(x-k \alpha,\alpha)=1 \text{ for any } \alpha \in B, \: 0\leq k < p^{\kappa+2}-2p^{\kappa} \text{ and } x \in L_{p^{\kappa+2} \cdot K}(f) 
\end{equation}

Taking average on $B$
\begin{equation}\label{D33a}
\frac{1}{m(B)}\int_B h_{\kappa}(x-k\alpha,\alpha)dm(\alpha)=1 
\end{equation}
$$\text{ for } \kappa \geq \kappa_0, \: 0 \leq k < p^{\kappa+2}-2p^{\kappa} \text{ and } x\in L_{p^{\kappa+2} \cdot K} (f).$$
Now we return to $h(x,\alpha)$ and we define $c_{\gamma}(\alpha)$ as in \eqref{D6b}. Again, $c_{\gamma}(\alpha)$ is a bounded, measurable function and \eqref{D7a} holds.\\
Denoting again by $\gamma_0(x)$ the identically $1$ character, the neutral element of $\widehat{G}$ we also have \eqref{D7d} satisfied. For $h_{\kappa}(x,\alpha)$ we have
\begin{equation}\label{D33b}
h_{\kappa}(x,\alpha) \sim 
\sum_{\gamma \in \widehat{G}} c_{\gamma,\kappa}(\alpha)\gamma(x)=\sum_{\gamma \in \widehat{G}} c_{\gamma}(\alpha)\left(\frac{1}{p^{\kappa}}\sum_{k=p^{\kappa}}^{2p^{\kappa}-1} \gamma(-k \alpha)\right) \gamma(x).
\end{equation} 

Since $\widehat{G}=Z(p^{\infty})$, the order of $\gamma$ is a power of $p$. We denote it by ${\mathrm {ord}}(\gamma)$. A $\gamma \in \widehat{G}$ of order $p^{r}, \: r>0$ is of the form
\begin{equation}\label{D34b}
\begin{split}
&\gamma(x)=\exp\left(\frac{2 \pi i l}{p^{r}}(x_0+px_1+\dots+p^{r -1}x_{r-1})\right) \\ 
&\text{ for } x=(x_0,x_1,\dots)\in G=Z_p \text{ with $l$ not divisible by } p.
\end{split}
\end{equation}
Since $B \subset \Gamma_{f,b}^{-1}$, for $\alpha \in B$ we have $\alpha_0 \neq 0$ which implies $\gamma(-\alpha) \neq 1$ and if $\gamma$ is of order $p^{r}, \: r>0$ then $\ggg(-\aaa)\in\C$ is also of order $p^{r}$, $r>0$. Hence for ${\mathrm{ord}}(\gamma)=p^{r}\leq p^\kappa$ and $\alpha \in B$ we have
\begin{equation}\label{D37a}
\sum_{k=p^{\kappa}}^{2p^{\kappa}-1} \gamma(-k \alpha)=\sum_{k=p^{\kappa}}^{2p^{\kappa}-1} \gamma^k(-\alpha)=\gamma(-p^{\kappa} \alpha) \frac{1-\gamma^{ p^{\kappa}}(-\alpha)}{1-\gamma(-\alpha)}=0.
\end{equation}
This way we can get rid of some characters with small torsion in the Fourier-series of $h_{\kappa}(x,\alpha)$.

Recalling that $c_{\gamma_0}(\alpha)=\int_G h(x,\alpha) \cdot 1 dm(\alpha)=0$ by \eqref{D7a} we have in \eqref{D33b}
\begin{equation}\label{E7a}
c_{\gamma_{0},\kappa}(\alpha)=0 \text{ if }
\alpha \in B.
\end{equation} 

Using \eqref{D33b} again we have
\begin{equation}\label{D40b}
h_{\kappa}(x-k \alpha,\alpha) \sim \sum_{\gamma \in \widehat{G}} c_{\gamma,\kappa}(\alpha)\gamma(-k \alpha)\gamma(x)
\end{equation}
and by \eqref{D33a} for any $0\leq k<p^{\kappa+2}-2p^{\kappa}$
\begin{equation}\label{D40c}
m(L_{p^{\kappa+2} \cdot K}(f))\leq \int_G\left|\frac{1}{m(B)}\int_B h_{\kappa}(x-k \alpha,\alpha)dm(\alpha)\right|^2dm(x)
\end{equation}
$$=\int_G|\varphi_{\kappa,k}(x)|^2dm(x),$$
where $\ds \varphi_{\kappa,k}(x)=\frac{1}{m(B)}\int_B h_{\kappa}(x-k \alpha,\alpha) dm(\alpha)$ is a bounded measurable function. 

Recall that by \eqref{D33b} we can express the Fourier-coefficients of $h_{\kappa}$ by those of $h$, that is
\begin{equation}\label{D39a}
c_{\gamma,\kappa}(\alpha)=\int_G h_{\kappa}(x,\alpha)\gamma(-x)dm(x)=c_{\gamma}(\alpha)\frac{1}{p^{\kappa}}\sum_{k=p^{\kappa}}^{2p^{\kappa}-1} \gamma(-k \alpha).
\end{equation}
Therefore,
\begin{equation}\label{D41a}
\begin{split}
&\widehat{\varphi}_{\kappa,k}(\gamma)=\int_G\frac{1}{m(B)} \int_B h_{\kappa}(x-k\alpha,\alpha)dm(\alpha)\gamma(-x)dm(x)\\
&=\frac{1}{m(B)}\int_B\int_G h_{\kappa}(x-k\alpha,\alpha)\gamma(-x)dm(x)dm(\alpha)\\
&=\frac{1}{m(B)}\int_G \chi_B(\alpha) \cdot \int_G h_{\kappa}(u,\alpha)\gamma(-u-k\alpha)dm(u)dm(\alpha)\\
&=\frac{1}{m(B)}\int_G \chi_B(\alpha)\gamma(-k \alpha) c_{\gamma,\kappa}(\alpha)dm(\aaa).
\end{split}
\end{equation}

If $\ggg\not=\ggg_{0}$ and $\mathrm{ord}(\ggg)\leq p^{\kkk}$
then by \eqref{D37a} and \eqref{D39a} we have $c_{\ggg,\kkk}(\aaa)=0$
for any $\aaa\in B$, and hence $\widehat{\varphi}_{\kappa,k}(\gamma)=0$.

Recall from \eqref{E7a} that if $\alpha \in B$ 
then $c_{\gamma_{0},\kappa}(\alpha)=0$. Hence $\widehat{\varphi}_{\kappa,k}(\gamma_{0})=0$ holds in this case as well.

Now suppose that $\gamma^{p^{\kappa}} \neq \gamma_0$. Then ${\mathrm{ord}}(\gamma) \geq p^{\kappa+1}$ and for $k=0,\dots,p^{\kappa+1}-1$ the characters $\gamma^{k}$ are different. 

By using the Parseval-formula we can continue \eqref{D40c} to obtain
for any $0\leq k <p^{\kkk+2}-2p^{\kkk}$ that
\begin{equation}\label{D42a}
m(L_{p^{\kappa+2}\cdot K}(f))\leq \sum_{\gamma \in \widehat{G}}|\widehat{\varphi}_{\kappa,k}(\gamma)|^2
\end{equation}
$$=\sum_{\gamma \in \widehat{G}, \: \gamma^{p^{\kappa}}\neq \gamma_0}\frac{1}{(m(B))^2} \cdot \left|\int_G \chi_B(\alpha)\gamma(-k \alpha)c_{\gamma,\kappa}(\alpha)dm(\alpha)\right|^2.$$

Since $p\geq 2$ implies $p^{\kkk+2}\geq 3p^{\kkk}$ 
we can use  \eqref{D32c} and \eqref{D42a} 
for $k=0,...,p^{\kkk}-1.$
Adding equation \eqref{D42a} for all $\kkk\geq \kkk_{0}$ and $k=0,...,p^{\kkk}-1$ we need to estimate 
\begin{equation}\label{D45a} 
\sum_{\kappa \geq \kappa_0} p^{\kappa}m(L_{p^{\kappa+2}\cdot K}(f))
\end{equation}
$$\leq \sum_{\kappa \geq \kappa_0}\sum_{\gamma \in \widehat{G}, \gamma^{p^{\kappa}}\neq \gamma_0} \frac{1}{(m(B))^2} \sum_{k=0}^{p^{\kappa}-1}\left|\int_G \chi_B(\alpha)c_{\gamma,\kappa}(\alpha)\gamma(-k \alpha) dm(\alpha)\right|^2.$$

Using \eqref{D33b} and  \eqref{D39a} first we estimate for $\kappa \geq \kappa_0$
\begin{equation}\label{E10a}
\sum_{k=0}^{p^{\kappa}-1} \left|\int_G \chi_B (\alpha) c_{\gamma,\kappa}(\alpha)\gamma(-k \alpha)dm(\alpha)\right|^2
\end{equation}
$$=\sum_{k=0}^{p^{\kappa}-1}\left|\int_G \chi_B(\alpha)c_{\gamma}(\alpha) \frac{1}{p^{\kappa}} \sum_{k'=p^{\kappa}}^{2p^{\kappa}-1}\gamma(-(k'+k)\alpha)dm(\alpha)\right|^2=**$$
in the last expression $k'+k$ can take values between $p^{\kappa}$ and $3p^{\kappa}-2$. If $p\geq 3$ then $3p^{\kappa}-2 \leq p^{\kappa+1}-1$ so for the moment we suppose that $p \geq 3$. In the end of this proof we will point out the little adjustments which we need for the case $p=2$.

For $p^{\kappa}\leq j \leq 3p^{\kappa}-2\leq p^{\kappa+1}-1$ we denote by $w_j'$ the number of those couples $(k,k')$ for which $ 0 \leq k \leq p^{\kappa} -1, \: p^{\kappa} \leq k'\leq 2p^{\kappa}-1$ and $k+k'=j$.\\
Obviously, $w_j'\leq p^{\kappa}$. Set $w_j=w_j'/p^{\kappa}\leq 1.$ We select these $w_j$ for all $\kappa_0 \leq \kappa \leq {\mathrm{ord}}(\gamma)$. For those values of $j$ for which we have not defined $w_j$ yet we set $w_j=0$. 

By using this notation we can continue $**$ from \eqref{E10a}
\begin{equation}\label{D46a}
** \leq \sum_{j=p^{\kappa}}^{p^{\kappa+1}-1} w_j \left|\int_G \chi_B(\alpha) c_{\gamma}(\alpha) \cdot \gamma(-j \alpha)dm(\alpha)\right|^2 
\end{equation}
$$\leq \sum_{j=p^{\kappa}}^{p^{\kappa+1}-1} \left|\int_G \chi_B(\alpha)c_{\gamma}(\alpha) \cdot \gamma(-j \alpha) dm(\alpha)\right|^2.$$
Using \eqref{E10a} and \eqref{D46a} while continuing the estimation of \eqref{D45a} we obtain
\begin{equation}\label{D47a}
\begin{split}
&\sum_{\kappa \geq \kappa_0} p^{\kappa}m(L_{p^{\kappa+2}\cdot K}(f))\leq\\
& \leq \sum_{\kappa \geq \kappa_0} \sum_{\gamma \in \widehat{G}, \gamma^{p^{\kappa}}\neq \gamma_0} \frac{1}{(m(B))^2}\sum_{j=p^{\kappa}}^{p^{\kappa+1}-1}\left|\int_G \chi_B(\alpha)c_{\gamma}(\alpha)\gamma(-j \alpha) dm(\alpha)\right|^2 \\ 
&\leq \sum_{\gamma \in \widehat{G}} \sum_{j=1}^{{\mathrm{ord}}(\gamma)-1} \frac{1}{(m(B))^2} \cdot \left| \int_G \chi_B (\alpha) c_\gamma(\alpha) \gamma(-j\alpha)dm(\alpha)\right|^2.
\end{split}
\end{equation}
Since for a fixed $\gamma$ the characters $\gamma^{-j}$ are different, for different values $0 \leq j < {\mathrm{ord}}(\gamma)$ by Parseval's Theorem we infer
\begin{equation}\label{D47b}
\sum_{j=1}^{{\mathrm{ord}}(\gamma)-1} \left| \int_G \chi_B(\alpha) c_{\gamma}(\alpha) \gamma(-j\alpha)dm(\alpha)\right|^2 \leq \int_G\left|\chi_B(\alpha)c_{\gamma}(\alpha)\right|^2dm(\alpha).
\end{equation}
Using this in \eqref{D47a} we obtain
\begin{equation}\label{D48a}
\begin{split}
&\sum_{\kappa \geq \kappa_0} p^{\kappa}m(L_{p^{\kappa+2}\cdot K}(f)) \leq \frac{1}{(m(B))^2} \sum_{\gamma \in \widehat{G}} \int_G |\chi_B(\alpha)c_{\gamma}(\alpha)|^2dm(\alpha)\\
&=\frac{1}{(m(B))^2}\int_G\chi_B(\alpha) \sum_{\gamma \in \widehat{G}} |c_{\gamma}(\alpha)|^2dm(\alpha)\\&=\frac{1}{(m(B))^2}\int_G\chi_B(\alpha)\int_G |h(x,\alpha)|^2dm(x)dm(\alpha)<+\infty.
\end{split}
\end{equation}
This completes the proof if $p \geq 3$. 

In case of $p=2$ the intervals $p^{\kappa} \leq j \leq 3p^{\kappa}-2$ are not disjoint, but $3p^{\kappa}-2 \leq p^{\kappa+2}-1$.
Instead of \eqref{D47a} we could obtain
$$\sum_{\kappa \geq \kappa_0} p^{\kappa+1}m(L_{p^{\kappa+1} \cdot K} (f))\leq 2 \cdot \sum_{\gamma \in \widehat{G}} \sum_{j=1}^{2 {\mathrm{ord}}(\gamma)-1} \frac{1}{(m(B))^2}\left|\int_G \chi_B(\alpha) c_{\gamma}(\alpha) \gamma(-j \alpha)dm(\alpha)\right|^2.$$

For a fixed $\gamma$ the characters $\gamma^{-j}(\alpha), \: j \leq 2 {\mathrm{ord}}(\gamma)-1$ are not different but for each $j \leq 2 {\mathrm{ord}}(\gamma)-1$ there is at most one other $j'\leq2 {\mathrm{ord}}(\gamma)-1$ such that $\gamma^{-j}=\gamma^{-j'}$, hence $$\sum_{j=1}^{2 {\mathrm{ord}}(\gamma)-1}\left| \int_G \chi_B (\alpha) c_\gamma(\alpha)\gamma(-j \alpha)dm(\alpha)\right|^2 \leq 2 \int_G |\chi_B(\alpha)c_{\gamma}(\alpha)|^2dm(\alpha).$$
The conclusion of the proof is similar to the $p\geq3$ case.
\end{proof}

\end{document}